\newcommand{\Int}{\operatorname{int}}
\newcommand{\real}{\operatorname{Re}}
\newcommand{\diag}{\operatorname{diag}}
\newcommand{\R}{\mathbb R}
\newcommand{\N}{\mathcal N}
\newcommand{\T}{\mathbb T}
\renewcommand{\S}{\mathbb S}
\renewcommand{\P}{\mathbb P}
\newcommand{\be}{\begin{equation}}
\newcommand{\ee}{\end{equation}}
\newcommand*\diff{\mathop{}\!\mathrm{d}}
\newcommand{\st}{\, | \,}
\newtheorem{Theorem}{Theorem}
\newtheorem{Fact}{Fact}
\newtheorem{Proposition}{Proposition}
\newtheorem{Definition}{Definition}
\newtheorem{Example}{Example}
\newtheorem{Problem}{Problem}
\newtheorem{Remark}{Remark}
\begin{document}


\title{Is my system of ODEs $k$-cooperative?\thanks{The research  of MM is
		supported in part by research grants from the Israel Science Foundation and the US-Israel Binational Science Foundation.}}

\author{Eyal Weiss and Michael Margaliot\thanks{
		EW is with the Dept. of Computer Science, Bar-Ilan University, Ramat Gan, 5290002, Israel.
		MM (Corresponding Author) is with the School of Electrical Engineering,
		and the Sagol School of Neuroscience, 
		Tel-Aviv University, Tel-Aviv~69978, Israel.
		E-mail: \texttt{michaelm@eng.tau.ac.il}}}

\maketitle
\begin{center} 
								Version Date: March 20, 2020\\
\end{center}

\begin{abstract}
A   linear dynamical system is called positive if its flow maps the non-negative  orthant to itself. 
More precisely, it maps the set of vectors 
with zero sign variations to itself. 
A   linear dynamical system is
 called~$k$-positive if its flow maps the set of vectors with up to~$k-1$
sign variations 
 to itself.  

A nonlinear dynamical system is called~$k$-cooperative
if its variational system, which is a time-varying  linear  dynamical system, is~$k$-positive. 
These systems have special asymptotic properties. For example, it was recently shown that strong~$2$-cooperative systems satisfy
 a strong  Poincar\'{e}-Bendixson
property.

Positivity and~$k$-positivity
  are easy to verify in terms of   the sign-pattern of the 
	matrix in the dynamics. However, these sign conditions are not invariant under a
	coordinate transformation. A natural 
	question is   to determine if a given~$n$-dimensional
	system is $k$-positive up to a coordinate transformation.
	We study this problem for two special kinds of transformations: permutations and scaling by a  signature matrix. For any~$n\geq 4$ and~$k\in\{2,\dots, n-2\}$, we provide 
	a graph-theoretical necessary and sufficient condition 
	for~$k$-positivity up to such
	coordinate transformations. 

We describe an application of our results to a specific class of Lotka-Volterra systems. 
\end{abstract}


\section{Introduction}\label{sec:intro}
The state-variables in many mathematical models
represent quantities that can never attain negative values.
In Markov chains~\cite{modeling_master_eqn}
 the state-variables represent probabilities, 
 in compartmental systems~\cite{sandberg78} and biological occupancy models~\cite{rfm_chap,RFMRD} 
 they represent the density  in each compartment, and so on.

The continuous-time
linear system
\be\label{eq:ctls}
\dot x=Ax
\ee
 is called \emph{positive} if its flow maps the non-negative orthant to itself~\cite{farina2000}. In other words, 
for any non-negative  initial state   
all the state-variables remain non-negative for all time~$t\geq 0$. Since the difference between trajectories is also a trajectory, this can be stated as the following \emph{partial ordering property}: if~$a,b$ are two initial conditions with~$a\leq b$ (i.e.,~$a_i\leq b_i$ for all~$i$)
then~$x(t,a)\leq x(t,b)$ for all~$t\geq 0$. 
It is well-known that~\eqref{eq:ctls} is positive if and only if~(iff)~$A$ is Metzler, that is,~$a_{ij}\geq 0$ for any~$i\not =j$.

 Positive systems have important and special properties~\cite{posi-tutorial}. For example, a positive LTI that is  asymptotically stable always admits a \emph{diagonal} quadratic Lyapunov function~\cite{RANTZER201572}. Diagonal stability implies that certain associated nonlinear systems have a well-ordered behavior~\cite{diagsta_book,cyclic_pass}.

The linear system~\eqref{eq:ctls}
 is called~\emph{$k$-positive} if its flow maps 
the set of vectors with up to~$k-1$ sign variations to itself. Thus, $1$-positive systems are positive systems.
Ref.~\cite{eyal_k_pos}
derived  simple necessary and sufficient conditions
guaranteeing $k$-positivity for any~$k$. These turn out to be sign-pattern conditions on~$A$. 
For example,~\eqref{eq:ctls}
is~$2$-positive iff~$A$ has the following sign pattern:
	  \begin{align*}
		a_{1n}  & \leq 0, \;  a_{n1}   \leq 0,\\ 
	  a_{ij}&\geq 0 \text{ for all } i,j \text{ with } |i-j|=1,\\
		  a_{ij} &=  0 \text{ for all } i,j \text{ with }1<|i-j|<n-1.
\end{align*}
Note that such an~$A$ is not necessarily Metzler, as~$a_{1n},a_{n1}$ can be negative. 

Positivity has far reaching applications also to nonlinear systems.
Consider the  nonlinear system
\be\label{eq:nlon}
\dot x=f(x),
\ee
and assume that its solutions evolve
 on a convex state-space~$\Omega\subset\R^n$.
Let~$J(x):=\frac{\partial}{\partial x}f(x)$ denote the Jacobian of the vector field.
The 
nonlinear system 
is called \emph{cooperative} if its solutions
satisfy  the partial ordering property 
described above.
Recall (see e.g.~\cite{sontag_cotraction_tutorial})
 that if~$a,b$ are two initial conditions, and~$z(t):=x(t,a)-x(t,b)$ then
\be\label{eq:var}
\dot z(t)=M(t)  z(t) .
\ee
where
\be\label{eq:mtmat}
M(t):=\int_0^1 J(rx(t,a)+(1-r)x(t,b)) \diff r .
\ee
Eq.~\eqref{eq:var} describes how the variation between two solutions evolves in time. 
Thus,~\eqref{eq:nlon} is cooperative iff the LTV system~\eqref{eq:var} is positive for all~$a,b\in\Omega$ and all~$t\geq 0$.

The quasi-convergence theorem of Hirsch states 
that almost every  bounded solution of a strongly 
cooperative system converges to the set of equilibria~\cite{hlsmith}. 
Cooperative systems have found numerous applications is 
neuroscience, systems biology, and chemistry. Indeed, in these fields it is often the case that the effect of one state-variable on another is either excitatory or inhibitory, making it relatively easy to verify that the Jacobian~$J(x)$ is Metzler for all~$x$. Angeli and 
Sontag~\cite{mcs_angeli_2003}
introduced an important generalization of 
cooperative systems (and more generally monotone systems) to control systems, and derived a small gain theorem for the interconnection of such systems.

The nonlinear system~\eqref{eq:nlon} is called~\emph{$k$-cooperative}
if the variational system~\eqref{eq:var} is~$k$-positive for all~$a,b \in \Omega$ and all~$t\geq 0$. 
For~$k=1$ this reduces to a standard cooperative system.
It was recently shown that strongly~$2$-cooperative systems
satisfy a strong Poincar\'{e}-Bendixson
property: if~$x(t,a)$ is a bounded solution and its omega limit set~$\omega(a)$ does not include an equilibrium then~$\omega(a)$ is a periodic solution~\cite{eyal_k_pos}.
These results are closely related to the seminal work of  
Mallet-Paret and   Smith on monotone cyclic feedback systems~\cite{poin_cyclic}.

The conditions for~$k$-positivity
and~$k$-cooperativity 
 are not invariant under 
 coordinate transformations. 
For example, if~$\dot x=Ax$ and~$y:=Tx $, with~$T$ a nonsingular matrix,  then
$\dot y =TAT^{-1} y$, and~$TAT^{-1}$ may be Metzler
 even if~$A$ is not Metzler. For an LTI there is a well-known spectral condition guaranteeing that the flow maps a proper cone to itself~\cite{berman87}, but this condition does nor carry over naturally to the
 system~\eqref{eq:var}. Furthermore, the set of vectors with up to~$k-1$ sign variations is not a proper cone. 
Nevertheless, if~$J(x)$ satisfies some \emph{sign pattern condition}
for all~$x\in\Omega$ then clearly this carries over to~$M(t)$ in~\eqref{eq:mtmat}.

 This raises the following question. 
\begin{Problem}\label{prob:ma}
Given the 
dynamical system~\eqref{eq:nlon}   and a set of nonsingular
matrices~$\T \subseteq \R^{n\times n}$, is there a matrix~$T\in\T$ and an integer~$k$ 
such that~$TJ( x)T^{-1}$ satisfies the sign condition for~$k$-positivity for all~$x\in \Omega$? 
\end{Problem}

For the case of positive systems this question is well-known. The motivation for addressing it is stated for example in~\cite{PhysRevE.54.3135}:
``The essential point is
the following: if a system can be shown to possess a partial
ordering in some coordinate system, then all essential dynamical
properties of partially ordered systems (to be described)
will hold, regardless of reference frame''.

Recall that a matrix~$S\in \R^{n\times n}$ is called a \emph{signature matrix} if~$S$ is diagonal, and its diagonal elements are plus or minus~$1$.
Smith~\cite{diag_sca_smith}
 solved  Problem~\ref{prob:ma}
 for the particular case of
  cooperativity (i.e.~$1$-coopertivity) and 
transformations by a  signature matrix.

Here we address Problem~\ref{prob:ma} for two  types of transformations: permutations and scaling by a signature matrix.  For any~$k\in\{2,\dots,n-2\}$ 
we give a necessary and sufficient graph-theoretic condition  for $k$-cooperativity up to such transformations.
  
The notion of~$k$-positive systems has also been extended to discrete-time systems~\cite{rami_osci,rola_spect,rolda_dt}, but here we consider only
 the continuous-time case.

The remainder of this note is organized as follows. 
The next section reviews   known definitions and results that will be used later on. Section~\ref{sec:main} presents  the main results,
and Section~\ref{sec:lts} 
describes an application to nonlinear Lotka-Volterra systems.  

\section{Preliminaries}

We first briefly review   $k$-positive systems. For two integers~$i\leq j$, we use~$[i,j]$
to denote the set~$\{i,i+1,\dots,j\}$.  
 The non-negative orthant in~$\R^n$ is
$\R^n_+:=\{ x\in \R^n \st x_i \geq 0,\; i\in[1,n]\}$.
For a matrix~$A\in\R^{n\times m}$,~$A'$ denotes the transpose of~$A$.
\subsection{$k$-positive systems}
For a vector~$x\in\R^n\setminus\{0\}$, let~$s^-(x)$ denote the number of sign variations in~$x$ after deleting all
the zero entries, with~$s^-(0)$ defined as zero. For example, for~$n=6$ and
\be\label{eq:yexa}
y=\begin{bmatrix} 1 &0 &-2.5 & 0 &0 &3 \end{bmatrix}',
\ee
 $s^-(y)$
is the number of sign variations in the vector~$\begin{bmatrix} 1 &  -2.5 & 3 \end{bmatrix}'$, so $s^-(y)=2$.
Let~$s^+(x)$ denote the maximal possible number of sign variations in~$x$
after replacing every zero entry by either minus or plus one. For example, for~$y$ in \eqref{eq:yexa}
$s^+(y)$ is the number of sign variations in the vector 
$\begin{bmatrix} 1 &1 &-2.5 & 1 &-1 &3 \end{bmatrix}'$,
 so $s^+(y)=4$. Clearly,
\[
0\leq s^-(x)\leq s^+(x)\leq n-1 \text{ for all } x\in\R^n.
\]

For any~$k\in[1,n-1]$, let
\begin{align*}
P^k_- &:=\{ x\in\R^n  \st  s^-(x)\leq k-1 \},\\
P^k_+ &:=\{ x\in\R^n  \st  s^+(x)\leq k-1 \}.
\end{align*}
For example,~$P^1_- =\R^n_+ \cup (-\R^n_+)$ 
and~$P^1_+=\Int(P^1_-)$. More generally, it is not difficult to show that~$P^k_-$ is closed, and that~$P^k_+=\Int(P^k_-)$ for all~$k$~\cite{rolda_dt}.

Fix a time interval~$-\infty\leq t_0<t_1\leq\infty$.
Consider the linear time-varying (LTV) system:
\be\label{eq:ltv}
\dot x(t)=A(t)x(t), \quad x(t_0)=x_0,
\ee
where~$A(\cdot):(t_0,t_1)\to \R^{n\times n}$ is
a    locally  (essentially)  
bounded measurable matrix function of~$t$.
Pick~$k\in[1, n-1]$. The LTV~\eqref{eq:ltv}   is called~\emph{$k$-positive} it its flow maps~$P^k_-$ to itself, and \emph{strongly~$k$-positive} if its flow maps~$P^k_-\setminus\{0\}$ to~$P^k_+$.

We say that~$A(t)$ \emph{admits a sign pattern} if there exists a symbolic matrix~$\bar A\in\{*,+, -, 0\}^{n\times n}$ such that:
if~$\bar a_{ij}=0$ then~$a_{ij}(t)=0$ for almost all~$t \in (t_0,t_1)$, and
if~$\bar a_{ij}=+$ [$\bar a_{ij}=-$] then~$a_{ij}(t) \geq 0$ [$a_{ij}(t) \leq 0$] for almost all~$t \in (t_0,t_1)$.
Note that if~$\bar a_{ij}=*$ then there is no constraint  on~$a_{ij}(t)$, i.e.~$*$ denotes a ``don't care''.

\begin{Definition}\label{def:Wkset}~\cite{eyal_k_pos}
	Pick~$n\geq 4 $ and~$k\in[2,n-2]$. Let~$\bar A_k^n\in\{*,-, 0,+ \}^{n\times n}$ 
	denote the sign matrix with:
	\begin{enumerate}[(I)]
		\item $\bar a_{ii}=*$ for all~$i$; 
		\item if $k$ is even [odd] then~$\bar a_{1n},   \bar a_{n1} =-$ [$\bar a_{1n},   \bar a_{n1} =+$];
			\item $\bar a_{ij} = +$ for all~$i,j$ with~$|i-j|=1$;
		\item $\bar a_{ij} =  0$ for all~$i,j$ with~$1<|i-j|<n-1$.
	\end{enumerate}
\end{Definition}
  For example,
$ \bar A^4_2=
\begin{bmatrix}
*& +& 0& -\\
+ &* &+ & 0 \\
0& +&*& +\\
-& 0&  +&*
\end{bmatrix}.
$
Note that for~$k$ odd~$\bar A^n_k$ is in particular Metzler, but for~$k$ even~$\bar A^n_k$ is not necessarily Metzler. 
Also,~$\bar A^n_2=\bar A^n_4=\dots=\bar A^n_{2k}$
for all~$k$ such that~$2k\leq n-2$, and~$\bar A^n_3=\bar A^n_5=\dots=\bar A^n_{2k+1}$
for all~$k$ such that~$2k+1\leq n-2$.  
Note also that~$  A (t) $ satisfies the sign pattern of both~$\bar A^n_k$ and~$A^n_{k+1}$
 iff~$  A(t)$ is tridiagonal with non-negative entries on the super- and sub-diagonals for almost all~$t$.
An LTV system that satisfies  such a sign pattern is called a \emph{totally positive differential system}~\cite{schwarz1970}.
 Nonlinear systems   whose Jacobians satisfy such a sign pattern have been analyzed by Smillie~\cite{smillie}, Smith~\cite{periodic_tridi_smith}, and others (see the tutorial paper~\cite{margaliot2019revisiting} for more details).

\begin{Theorem}\label{thm:kiff}~\cite{eyal_k_pos}
Pick~$n\geq 4$ and~$k\in[2,n-2]$.
The system~\eqref{eq:ltv} is~$k$-positive on~$(t_0,t_1)$
iff~$A(t)$ admits the sign structure~$\bar A^n_k$ for almost all~$(t_0,t_1)$. 
\end{Theorem}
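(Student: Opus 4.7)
The plan is to prove the two implications separately: \textbf{sufficiency} via an invariance argument for the closed set~$P^k_-$, and \textbf{necessity} by probing each entry of~$A(t_0)$ with a tailored initial condition.

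For sufficiency, assume~$A(t)$ admits the pattern~$\bar A^n_k$ for almost every~$t$. Since~$P^k_-$ is closed, it suffices to show that the set~$\{t \geq t_0 : x(t)\in P^k_-\}$ is open, which reduces to a local argument at a boundary point: if~$x(\tau) \in P^k_- \setminus P^k_+$ then the zero entries of~$x(\tau)$ must start evolving in a direction that does not create new sign variations. I would exploit the cyclic-tridiagonal structure of~$\bar A^n_k$: because~$\bar a_{ij}=0$ for~$1<|i-j|<n-1$, the derivative~$\dot x_i(\tau)=\sum_j a_{ij}(\tau) x_j(\tau)$ depends only on~$x_{i-1}(\tau)$,~$x_i(\tau)$,~$x_{i+1}(\tau)$, and, for~$i\in\{1,n\}$, on the opposite corner~$x_n(\tau)$ or~$x_1(\tau)$. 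A Kamke-style case analysis of which entries vanish at~$\tau$, combined with the nonnegativity of the sub/super-diagonal and the parity-specific sign of~$a_{1n}(\tau), a_{n1}(\tau)$, then shows that the entries leaving zero do so with signs that keep~$x$ in~$P^k_-$ just after~$\tau$.

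For necessity, I would expand~$x(t) = x(t_0) + (t - t_0) A(t_0) x(t_0) + o(t-t_0)$ at a Lebesgue point~$t_0$ of~$A(\cdot)$ and derive each sign constraint in~$\bar A^n_k$ from a distinct choice of~$x(t_0)$. To force constraint~(IV), take~$x(t_0)=e_j$ for~$j\in[2,n-1]$; since~$s^-(e_j)=0\leq k-1$, we have~$e_j\in P^k_-$, and for small~$t-t_0>0$ the sign pattern of~$x(t)$ coincides with the sign pattern of the~$j$-th column of~$A(t_0)$ (with the~$j$-th entry positive). If some~$a_{ij}(t_0)\neq 0$ with~$1<|i-j|<n-1$, one can choose~$j$ so that the induced sign pattern has at least~$k$ sign variations, contradicting~$k$-positivity. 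Knowing~(IV), constraint~(III) is extracted from the same probe~$e_j$: a negative value of~$a_{i\pm 1,j}(t_0)$ would again produce an excess sign change. Constraint~(II) is revealed by initial conditions supported on~$\{e_1,e_n\}$, whose image under the flow isolates the two corner entries; the parity of~$k$ then fixes whether these corners must be nonpositive or nonnegative. Finally, the a.e.\ statement follows because the argument is carried out at Lebesgue points of the locally bounded measurable map~$A(\cdot)$.

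The \textbf{main obstacle} is the boundary bookkeeping in the sufficiency direction. Tracking~$s^-(x(t))$ as entries cross zero in a \emph{cyclic} setting is more subtle than in the classical tridiagonal non-cyclic case analyzed by Smillie, because the "wraparound" from~$x_n$ back to~$x_1$ contributes a phantom sign change whose parity is~$(-1)^k$. This parity is precisely what dictates the sign of~$a_{1n},a_{n1}$ in~$\bar A^n_k$; carrying out the case division of simultaneous zero crossings exhaustively, and verifying that in each case the specified sign pattern indeed prevents an increase in~$s^-$, is the crux of the argument.
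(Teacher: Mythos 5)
A point of context first: Theorem~\ref{thm:kiff} is imported from~\cite{eyal_k_pos} and the present paper gives no proof of it, so there is no internal argument to compare yours against; I evaluate the proposal on its own terms. The genuine gap is in your necessity direction. A probe of the form $x(t_0)=e_j$ cannot detect a violation of condition (IV) (nor of (III) or (II)) once $k\geq 2$. To first order, $x(t)=e_j+(t-t_0)A(t_0)e_j+o(t-t_0)$ has one entry near $1$ and all other entries of size $O(t-t_0)$; the entries corresponding to \emph{zero} entries of the $j$-th column are $o(t-t_0)$ with undetermined sign, so they cannot be ignored when bounding $s^-$ from below, and the entries with definite signs force at most a handful of variations. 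Concretely, take $n=5$, $k=2$, and $A$ constant satisfying $\bar A^5_2$ except that $a_{31}=1$, which violates (IV). Then $x(t)\approx\bigl(1+O(t),\,t a_{21},\,t,\,o(t),\,t a_{51}\bigr)$ with $a_{21}\geq 0$ and $a_{51}\leq 0$, so the guaranteed number of sign variations is one, i.e.\ exactly $k-1$: no contradiction with $2$-positivity is produced, even though the pattern is violated. The correct necessity argument must start from a vector $x_0\in P^k_-$ that \emph{already} has $s^-(x_0)=k-1$, with $x_{0,i}=0$ and with the in-neighbours of coordinate $i$ other than $j$ also zeroed out so that $\dot x_i(t_0)=a_{ij}(t_0)x_{0,j}$ is isolated, and with the existing $k-1$ variations positioned so that the resulting sign of $x_i$ inserts a $k$-th variation; the inequality $k\leq n-2$ is precisely what leaves enough coordinates to arrange this, and one needs two such configurations per entry to exclude both signs. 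Your derivations of (III) and (II) need the same repair.

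The sufficiency half is a reasonable outline of the sign-variation-diminishing argument in the style of Smillie and Mallet-Paret--Smith, and the cyclic parity observation about $a_{1n},a_{n1}$ is the right heuristic. But as written it proves nothing yet: reducing invariance of the closed, non-convex set $P^k_-$ to "openness of $\{t:x(t)\in P^k_-\}$" just restates the problem, and a first-order tangency check at a boundary point fails exactly in the degenerate cases where the derivative of a vanishing entry also vanishes (several consecutive zero entries). The standard way to close this is missing from your sketch: first perturb $A$ so that the sub/super-diagonal and corner entries are strictly signed, prove the \emph{strong} statement that the flow maps $P^k_-\setminus\{0\}$ into $P^k_+$ instantaneously, and then recover the non-strict case by letting the perturbation tend to zero and using closedness of $P^k_-$. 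Without that device, the "boundary bookkeeping" you defer is not merely tedious but cannot be completed by first-order reasoning alone.
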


This implies 
that we can classify~$k$-positivity, with~$k\in[2,n-2]$
to just two cases: odd-positivity i.e. the flow of~\eqref{eq:ltv}
maps~$P^{k}$ to itself for all odd~$k\in[2,n-2]$,
and even-positivity i.e. the flow of~\eqref{eq:ltv}
maps~$P^{k}$ to itself for all even~$k\in[2,n-2]$.
  By definition,
	$
	P^1_-\subset P^2_- \subset \dots \subset P^{n-1},
	$
so this induces a Morse decomposition of the flow~\cite{GEDEON20091013}.

The next example demonstrates that the criteria
for~$k$-positivity described in Thm.~\ref{thm:kiff} are
 not invariant to coordinate transformations. 
\begin{Example}\label{ex:perm}
	Consider the symbolic matrix
	\be\label{eq:exabara}
	\bar A  = \begin{bmatrix}     
	*&0&0&+ \\
	+&*&+&0 \\
	0&  0&*&-\\
	+&0&-&*															
	\end{bmatrix}. 
	\ee
	Since~$\bar A \notin M^4_2$, 
	the corresponding system is in general 
	not even-positive. 
	However, for  the permutation matrix
	$
	P  := \begin{bmatrix}     
	0&0&1&0 \\
	0&1&0&0 \\
	1&0&0&0\\
	0&0&0&1																
	\end{bmatrix}, 
	$
	we have
	\[
	\bar B  :=  P\bar AP' = \begin{bmatrix}     
	*& 0&0&- \\
  + &*&+& 0 \\
	0&0&*&+\\
	- & 0& + &*														
	\end{bmatrix}, 
	\]
	so~$\bar B \in  M^4_2$. Thus, the system obtained by defining~$y(t):=Px(t)$ is even-positive.
\end{Example}

The effect of   certain coordinate   transformations
on the sign pattern of a matrix can be analyzed using a graph-theoretic approach. 

\subsection{Influence graphs}
We associate with a sign matrix~$\bar A\in \{*,0,+, -\}^{n\times n}$
a  signed and directed 
graph called the influence graph~$G=(V,E)$. The vertices are
$V=\{x_1,\dots,x_n\}$ (i.e., every vertex corresponds to a
 state-variable in the system~$\dot x=\bar A x$).
There is a directed edge from vertex~$x_j$ to~$x_i$ 
if~$ \bar a_{ij} \neq 0$ and $i \neq j$, and 
the sign of this edge is~$\bar a_{ij}$.
Note that there are no self-loops and that there 
it at most one edge from~$x_j$ to~$x_i$.
There is a one-to-one correspondence between the influence graph of a system and its sign pattern, 
except for the entries on the diagonal of~$\bar A$ (that are irrelevant for our considerations).
Thus, with a slight abuse of notation
  we say that~$\bar A$ satisfies some graph-theoretical property iff its
	influence graph satisfies  this property.

The set of in-neighbors [out-neighbors] of a vertex~$x_i$ is denoted by~$\N_{in}(x_i)$ [$\N_{out}(x_i)$],
and the set of neighbors of~$x_i$ is~$\N(x_i) := \N_{in}(x_i) \bigcup \N_{out}(x_i)$.
The \emph{in-degree} [\emph{out-degree}] 
of~$x_i$ is~$|\N_{in}(x_i)|$ [$|\N_{out}(x_i)|$], and the \emph{degree} is~$|\N(x_i)|$. 

For example, Fig.~\ref{fig:grapha}
depicts the influence graphs associated with the symbolic matrix~$\bar A$ and~$\bar B$ in Example~\ref{ex:perm}. 

From here on we always assume that
  the   influence graph is connected. Otherwise, we
can simply treat each connected component separately.

\section{Main Results}\label{sec:main}

Consider a symbolic matrix~$\bar A  $ that is either odd-positive,   even-positive, or both. 
Then the definition of the matrices~$M^n_k$ implies that
  its associated influence graph satisfies the following   properties:
\begin{enumerate}[(a)]
	\item \emph{Degree constraint:} 
	$|\N(x_i)| \leq 2$ for any vertex~$x_i$ in the graph. \label{item:two_neig}
	\item \emph{Sign-symmetric influence:}
	for any two vertices~$x_i$, $x_j$ in the graph,
	if the edge from
 $x_i$ to $x_j$ is~$+$ [$-$] then
there cannot be a~$-$ [$+$] edge  
 from~$x_j$ to~$x_i$. \label{item:SSI} 
\end{enumerate}
Furthermore,
	if $\bar A$ is   odd-positive then
\begin{enumerate}[(c)]
		\item 	\label{item:copt}
	  all edges are~$+$, 
		\end{enumerate}
	whereas if $\bar A$ is  not  odd-positive  and is even-positive 
	then \begin{enumerate}[(d)]
	\item  \label{item:dopt} there exist~$r\in[1,2]$ edges that are~$-$ and all other edges are~$+$.  
\end{enumerate}

\subsection{$k$-positivity up to permutations}\label{ssec:permute}
Our first two main results provide a necessary and sufficient  graph-theoretic condition for~$k$-positivity up to a permutation. Let~$\P\subset\R^{n\times n}$
 denote the set of  permutation
matrices.

\begin{Proposition}\label{prop:struct_k_pos_odd}
Let~$\bar A  $
be a symbolic matrix. 
The following two conditions are equivalent. 
\begin{enumerate} 
\item\label{item:1ftr}
There exist an odd~$k\in[2,n-2]$ and~$P\in \P$ such that~$P \bar A P'$ is~$k$-positive. 
\item \label{item:2frt}
 $\bar A$
satisfies  properties~\ref{item:two_neig},
\ref{item:SSI}, and~\ref{item:copt}.
\end{enumerate} 
\end{Proposition}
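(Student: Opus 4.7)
The plan is to verify each implication separately, using Theorem~\ref{thm:kiff} as the bridge between $k$-positivity and the sign pattern $\bar A^n_k$. For the forward direction, suppose $P\bar A P'$ is $k$-positive for some odd $k\in[2,n-2]$ and some $P\in\P$. By Theorem~\ref{thm:kiff}, $P\bar A P'$ admits the sign pattern $\bar A^n_k$. Inspecting $\bar A^n_k$ for odd $k$, the associated influence graph is a subgraph of the undirected $n$-cycle $1\text{--}2\text{--}\cdots\text{--}n\text{--}1$ with every present edge signed $+$, so properties (a), (b), (c) hold for $P\bar A P'$. Because the similarity $\bar A \mapsto P\bar A P'$ acts on the influence graph as a pure relabeling of vertices, these graph-theoretic properties transfer verbatim back to~$\bar A$.

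For the converse, assume $\bar A$ satisfies (a), (b), (c); by standing assumption its influence graph is connected. The key structural fact is elementary: a connected graph on $n$ vertices in which every vertex has degree at most $2$ (in the undirected sense of $\N(x_i)$) is either a Hamiltonian path or a Hamiltonian cycle. Let $P\in\P$ relabel the vertices as $1,2,\dots,n$ along this path or cycle (arbitrary endpoint if a path, arbitrary starting vertex and orientation if a cycle). In $P\bar A P'$ the only off-diagonal positions $(i,j)$ where the entry may be nonzero are those with $|i-j|=1$, together with $(1,n)$ and $(n,1)$ in the cycle case; by~(c) every such entry is $+$, and condition~(b) is automatically compatible with an all-$+$ pattern. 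Since the sign symbol~$+$ permits the value~$0$, this matches $\bar A^n_k$ for every odd $k\in[2,n-2]$ (the path case simply forces $\bar a_{1n}=\bar a_{n1}=0$), and Theorem~\ref{thm:kiff} then yields that $P\bar A P'$ is $k$-positive.

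The only step requiring actual argument is the structural observation that a connected graph of maximum degree~$2$ is a Hamiltonian path or cycle; this is a standard exercise (extend greedily from any vertex along unused incident edges, and use connectedness to argue the traversal covers all vertices). I expect no serious obstacle. The main care needed is bookkeeping the convention that $+$ in a sign pattern includes the value~$0$---so that a path, which has no edge between $1$ and $n$, is genuinely consistent with $\bar A^n_k$---and treating the path and cycle cases uniformly when choosing~$P$.
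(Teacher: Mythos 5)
Your proof is correct and follows essentially the same route as the paper: the forward direction transfers properties (a), (b), (c) back through the vertex relabeling, and the converse uses the fact that a connected graph of maximum degree two is a Hamiltonian path or cycle (the paper's Cases 1 and 2), relabels along it, and checks consistency with $\bar A^n_k$ for odd $k$. Your explicit remark that the symbol $+$ permits a zero entry, so the path case is compatible with $\bar a_{1n}=\bar a_{n1}=+$, is a point the paper leaves implicit but is handled correctly here.
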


\begin{Proposition}\label{prop:struct_k_pos_even}
Let~$\bar A  $
be a symbolic matrix.
The following two conditions are equivalent. 
\begin{enumerate} [(I)]
\item\label{item:evenkk}
There exist an even~$k\in[2,n-2]$ and~$P\in \P$ such that~$P \bar A P'$ is~$k$-positive,
but there does not  exist ~$P\in \P$ such that~$P \bar A P'$ is
odd-positive. 
\item \label{item:eveniop}
 $\bar A$
satisfies  properties~\ref{item:two_neig},
\ref{item:SSI}, and~\ref{item:dopt}.
\end{enumerate}
\end{Proposition}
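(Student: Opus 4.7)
The plan is to leverage Theorem~\ref{thm:kiff} to translate $k$-positivity into a sign-pattern condition and Proposition~\ref{prop:struct_k_pos_odd} to handle the ``no odd-positive permutation'' clause. For $(\ref{item:evenkk})\Rightarrow(\ref{item:eveniop})$, I start from the assumed $P$ and even $k$: by Theorem~\ref{thm:kiff}, $P\bar A P'$ admits the sign structure $\bar A^n_k$, whose influence graph is an $n$-cycle with $+$ on every ``path position'' $(i,i{\pm}1)$ and $-$ on the single closing pair $\{1,n\}$. This graph plainly satisfies \ref{item:two_neig}, \ref{item:SSI}, and carries at most two $-$ directed edges sitting on the single undirected pair $\{1,n\}$, matching \ref{item:dopt}; all three properties are invariant under conjugation by a permutation matrix, so they transfer back to $\bar A$. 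Finally, the hypothesis that no permutation makes $\bar A$ odd-positive, together with Proposition~\ref{prop:struct_k_pos_odd}, forces \ref{item:copt} to fail for $\bar A$, giving at least one $-$ edge and hence $r\geq 1$.

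For $(\ref{item:eveniop})\Rightarrow(\ref{item:evenkk})$, I must construct an explicit permutation. Property \ref{item:two_neig} together with connectedness restricts the undirected influence graph of $\bar A$ to be either an $n$-vertex path or an $n$-vertex cycle. By \ref{item:dopt} and the sign-symmetry \ref{item:SSI}, all $r\in[1,2]$ negative directed edges sit on a single undirected edge $\{u,v\}$ of this graph. I define $P$ by starting the walk at $u$, traversing the path or cycle \emph{away} from the edge $\{u,v\}$, and assigning labels $1,2,\dots,n$ in the order visited, so that $u\mapsto 1$ and $v\mapsto n$.

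Under this labeling, every $+$ edge of $\bar A$ lies at some position $(i,i{+}1)$ with $i\in[1,n-1]$, the $-$ edge(s) occupy exactly $(1,n)$ and/or $(n,1)$, and every remaining off-diagonal position with $1<|i-j|<n-1$ carries a $0$ because $\bar A$ had no edge there. This is precisely the sign structure of $\bar A^n_k$ for every even $k\in[2,n-2]$, so by Theorem~\ref{thm:kiff}, $P\bar A P'$ is $k$-positive. For the remaining clause, \ref{item:copt} fails for $\bar A$ (by \ref{item:dopt} with $r\geq 1$) and is permutation-invariant, so it fails for every $P'\bar A P$; Proposition~\ref{prop:struct_k_pos_odd} then rules out odd-positivity.

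The main technical point is the path case of the construction: a path has only $n-1$ undirected edges, so after relabeling one ``path position'' $(i,i{+}1)$ of $P\bar A P'$ is empty and carries a $0$. This is still consistent with $\bar A^n_k$ because a $0$ entry satisfies both $\geq 0$ and $\leq 0$, but the bookkeeping for the two subpaths created by cutting at $\{u,v\}$ and placing $u,v$ at the new labels $1,n$ must be done carefully so that the signs of all traversed edges align with the $+$ requirement of $\bar A^n_k$ on the path positions. Once this verification is in place, the remaining sign checks reduce to matching $0$'s with $0$'s and $\pm$'s with $\pm$'s entry-wise.
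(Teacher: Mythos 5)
Your proof is correct and follows essentially the same route as the paper: the forward direction transfers the graph properties of $\bar A^n_k$ back through the permutation (invoking Prop.~\ref{prop:struct_k_pos_odd} to get $r\geq 1$ from the no-odd-positive clause), and the converse classifies the connected, degree-$\leq 2$ graph as a path or a cycle, places the negative pair $\{u,v\}$ at positions $1$ and $n$, and walks outward, which is precisely the paper's Cases 1--4. The path-case bookkeeping you defer (two subpaths labeled $1,\dots,m$ from $u$ and $n,n-1,\dots,m+1$ from $v$, with the one unused consecutive position carrying a $0$, which is compatible with $\bar A^n_k$) is exactly what the paper carries out in its Cases 3 and 4, and your explicit use of Prop.~\ref{prop:struct_k_pos_odd} to rule out odd-positivity for \emph{every} permutation is, if anything, slightly more careful than the paper's treatment of that clause.
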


\begin{Example}\label{exa:refer}
The influence graph associated with the symbolic matrix~$\bar A$ in~\eqref{eq:exabara} 
  satisfies    properties~\ref{item:two_neig},
\ref{item:SSI}, and~\ref{item:dopt},
so by
Prop.~\ref{prop:struct_k_pos_even}
there exists~$P\in \P$ such that~$P \bar A P'$ is~$k$-positive. Furthermore, there does not exist
a~$P\in \P$ such that~$P \bar A P'$ is odd-positive.
\end{Example}

\begin{figure}[t]
	\centering
\begingroup%
\makeatletter%
\providecommand\color[2][]{%
	\errmessage{(Inkscape) Color is used for the text in Inkscape, but the package 'color.sty' is not loaded}%
	\renewcommand\color[2][]{}%
}%
\providecommand\transparent[1]{%
	\errmessage{(Inkscape) Transparency is used (non-zero) for the text in Inkscape, but the package 'transparent.sty' is not loaded}%
	\renewcommand\transparent[1]{}%
}%
\providecommand\rotatebox[2]{#2}%
\newcommand*\fsize{\dimexpr\f@size pt\relax}%
\newcommand*\lineheight[1]{\fontsize{\fsize}{#1\fsize}\selectfont}%
\ifx\svgwidth\undefined%
\setlength{\unitlength}{133.50949265bp}%
\ifx\svgscale\undefined%
\relax%
\else%
\setlength{\unitlength}{\unitlength * \real{\svgscale}}%
\fi%
\else%
\setlength{\unitlength}{\svgwidth}%
\fi%
\global\let\svgwidth\undefined%
\global\let\svgscale\undefined%
\makeatother%
\begin{picture}(1,0.42693019)%
\lineheight{1}%
\setlength\tabcolsep{0pt}%
\put(0,0){\includegraphics[width=\unitlength]{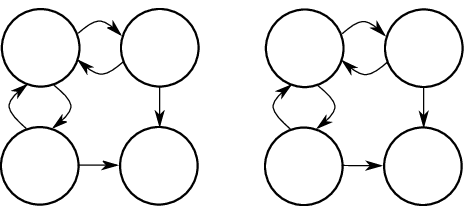}}%
\put(-0.39099016,0.72359639){\color[rgb]{0,0,0}\makebox(0,0)[lt]{\begin{minipage}{0.36112997\unitlength}\raggedright \end{minipage}}}%
\put(-1.13732551,2.15206618){\color[rgb]{0,0,0}\makebox(0,0)[lt]{\begin{minipage}{0.96301343\unitlength}\raggedright \end{minipage}}}%
\put(-0.19838745,1.87921241){\color[rgb]{0,0,0}\makebox(0,0)[lt]{\begin{minipage}{0.04815066\unitlength}\raggedright \end{minipage}}}%
\put(0.45967158,-0.32769318){\color[rgb]{0,0,0}\makebox(0,0)[lt]{\begin{minipage}{1.39636947\unitlength}\raggedright \end{minipage}}}%
\put(0.17859213,0.72259324){\color[rgb]{0,0,0}\makebox(0,0)[lt]{\begin{minipage}{0.36112997\unitlength}\raggedright \end{minipage}}}%
\put(-0.56774322,2.15106305){\color[rgb]{0,0,0}\makebox(0,0)[lt]{\begin{minipage}{0.96301344\unitlength}\raggedright \end{minipage}}}%
\put(0.37119485,1.87820927){\color[rgb]{0,0,0}\makebox(0,0)[lt]{\begin{minipage}{0.04815066\unitlength}\raggedright \end{minipage}}}%
\put(1.02925388,-0.32869635){\color[rgb]{0,0,0}\makebox(0,0)[lt]{\begin{minipage}{1.39636948\unitlength}\raggedright \end{minipage}}}%
\put(0.86172118,0.06841315){\color[rgb]{0,0,0}\makebox(0,0)[lt]{\lineheight{1.25}\smash{\begin{tabular}[t]{l}$x_2$\end{tabular}}}}%
\put(0.59931276,0.06833075){\color[rgb]{0,0,0}\makebox(0,0)[lt]{\lineheight{1.25}\smash{\begin{tabular}[t]{l}$x_1$\end{tabular}}}}%
\put(0.60072983,0.32794425){\color[rgb]{0,0,0}\makebox(0,0)[lt]{\lineheight{1.25}\smash{\begin{tabular}[t]{l}$x_4$\end{tabular}}}}%
\put(0.86306879,0.32652551){\color[rgb]{0,0,0}\makebox(0,0)[lt]{\lineheight{1.25}\smash{\begin{tabular}[t]{l}$x_3$\end{tabular}}}}%
\put(0.52941751,0.19627719){\color[rgb]{0,0,0}\makebox(0,0)[lt]{\lineheight{1.25}\smash{\begin{tabular}[t]{l}$-$\end{tabular}}}}%
\put(0.75307755,0.33626658){\color[rgb]{0,0,0}\makebox(0,0)[lt]{\lineheight{1.25}\smash{\begin{tabular}[t]{l}$+$\end{tabular}}}}%
\put(0.75494627,0.23529372){\color[rgb]{0,0,0}\makebox(0,0)[lt]{\lineheight{1.25}\smash{\begin{tabular}[t]{l}$+$\end{tabular}}}}%
\put(0.646138,0.19555901){\color[rgb]{0,0,0}\makebox(0,0)[lt]{\lineheight{1.25}\smash{\begin{tabular}[t]{l}$-$\end{tabular}}}}%
\put(0.74169487,0.10557795){\color[rgb]{0,0,0}\makebox(0,0)[lt]{\lineheight{1.25}\smash{\begin{tabular}[t]{l}$+$\end{tabular}}}}%
\put(0.83974355,0.2042255){\color[rgb]{0,0,0}\makebox(0,0)[lt]{\lineheight{1.25}\smash{\begin{tabular}[t]{l}$+$\end{tabular}}}}%
\put(0.292139,0.06941648){\color[rgb]{0,0,0}\makebox(0,0)[lt]{\lineheight{1.24999988}\smash{\begin{tabular}[t]{l}$x_2$\end{tabular}}}}%
\put(0.02973052,0.0693339){\color[rgb]{0,0,0}\makebox(0,0)[lt]{\lineheight{1.25}\smash{\begin{tabular}[t]{l}$x_1$\end{tabular}}}}%
\put(0.03114755,0.32894737){\color[rgb]{0,0,0}\makebox(0,0)[lt]{\lineheight{1.25}\smash{\begin{tabular}[t]{l}$x_4$\end{tabular}}}}%
\put(0.29348641,0.32752855){\color[rgb]{0,0,0}\makebox(0,0)[lt]{\lineheight{1.25}\smash{\begin{tabular}[t]{l}$x_3$\end{tabular}}}}%
\put(-0.04798931,0.19828351){\color[rgb]{0,0,0}\makebox(0,0)[lt]{\lineheight{1.25}\smash{\begin{tabular}[t]{l}$+$\end{tabular}}}}%
\put(0.19071853,0.33426028){\color[rgb]{0,0,0}\makebox(0,0)[lt]{\lineheight{1.25}\smash{\begin{tabular}[t]{l}$-$\end{tabular}}}}%
\put(0.1915839,0.24532514){\color[rgb]{0,0,0}\makebox(0,0)[lt]{\lineheight{1.25}\smash{\begin{tabular}[t]{l}$-$\end{tabular}}}}%
\put(0.0677281,0.19957161){\color[rgb]{0,0,0}\makebox(0,0)[lt]{\lineheight{1.25}\smash{\begin{tabular}[t]{l}$+$\end{tabular}}}}%
\put(0.17311554,0.10557785){\color[rgb]{0,0,0}\makebox(0,0)[lt]{\lineheight{1.25}\smash{\begin{tabular}[t]{l}$+$\end{tabular}}}}%
\put(0.26735308,0.20322243){\color[rgb]{0,0,0}\makebox(0,0)[lt]{\lineheight{1.25}\smash{\begin{tabular}[t]{l}$+$\end{tabular}}}}%
\end{picture}%
\endgroup%
	\caption{Influence graphs corresponding to~$\bar A$ (left) and~$\bar B$ (right) in Example~\ref{ex:perm}.}
	\label{fig:grapha}
\end{figure}
 
 {\sl Proof of Prop.~\ref{prop:struct_k_pos_odd}.}
If~\ref{item:1ftr}) holds then
the influence  graph associated  with~$P \bar A P'$
satisfies properties~\ref{item:two_neig}, \ref{item:SSI}, and~\ref{item:copt}. 
Since a permutation amounts to relabeling  the vertices, it is easy to see that this implies that the influence  graph associated  with~$  \bar A  $ also
satisfies properties~\ref{item:two_neig}, \ref{item:SSI}, 
and~\ref{item:copt}. We conclude that~\ref{item:1ftr})
implies~\ref{item:2frt}).

To prove the converse implication, assume that~\ref{item:2frt}) holds.
If~$\bar A$   is odd-positive
then~\ref{item:1ftr}) holds for~$P=I$. 
Thus, we may assume that~$\bar A$
  is not odd-positive.
	We  consider several cases.

\noindent {\sl{Case 1.}} 
Suppose 
 that every vertex has exactly two neighbors.
Pick an 
 arbitrary vertex, 
and associate with it a   state-variable~$y_1$.
Pick one of its neighbors and associate with it the
state-variable~$y_2$.
Now~$y_2$ has a  unique  neighbor that is not~$y_1$ and we denote it by~$y_3$.
 We proceed in this way choosing at each step  the only neighbor of the current vertex that has not  been chosen yet.
Since the influence graph is connected, this procedure ends after all vertices have been  indexed in increasing order.
The resulting influence graph is odd-positive. 
The~$y_i$s are clearly a permutation of the~$x_i$s 
i.e.,~$y=Px$, for some~$P\in\P$. 
 
\noindent  {\sl{Case 2.}}
 Suppose  that every vertex has exactly two neighbors, 
except for two vertices, say~$x_i$ and~$x_j$, that have a single  neighbor.
 Let~$y_1:=x_i$ and~$y_n:=x_j$. 
Since the graph is connected, 
 these vertices cannot be neighbors. 
Denote the single neighbor of~$y_1$ by~$y_2$.  
Denote the single neighbor of~$y_2$ that is not~$y_1$
by~$y_3$. 
 We proceed in this way choosing at each step  the only neighbor of the current vertex that has not  been chosen yet.
Since the influence graph is connected and all the vertices
except for~$y_1$ and~$y_n$ have two neighbors, 
this procedure ends after all vertices have been  indexed in increasing order.
The resulting influence graph is odd-positive (and also even-positive). 

Since we assume   that~\ref{item:2frt}) holds and the graph is connected, the two cases above are    the only 
possible cases, and in each such case we showed that~\ref{item:1ftr}) holds.
Thus,~\ref{item:2frt}) implies~\ref{item:1ftr}).~\hfill{$\QED$}

{\sl Proof of Prop.~\ref{prop:struct_k_pos_even}}.
Suppose that~\ref{item:evenkk} holds. Then there 
exists~$P\in\P$ such that~$P \bar A P'$ is even-positive, but not odd-positive. This implies that~$P \bar A P'$
satisfies properties~\ref{item:two_neig}, \ref{item:SSI}, and~\ref{item:dopt}. Indeed, there must be at least one~$-$ edge in~$P \bar A P'$, as otherwise it is 
also odd-positive.
Since a permutation amounts to relabeling  the 
vertices, $  \bar A  $ also
satisfies properties~\ref{item:two_neig}, \ref{item:SSI}, 
and~\ref{item:dopt}. We conclude that~\ref{item:evenkk} 
implies~\ref{item:eveniop}.

To prove the converse implication, assume that~\ref{item:eveniop} holds.
If~$\bar A$   is even-positive and not odd-positive 
then~\ref{item:evenkk} 
 holds for~$P=I$. 
Thus, we may assume this is not so.
	We  consider several cases.

\noindent {\sl{Case 1.}}
 Suppose that  all the edges are~$+$
except for two edges that are~$-$, and that 
  every vertex has exactly two neighbors.
	The sign-symmetric influence property 
implies that the~$-$ edges
 are incident to two vertices, say,~$x_i$ and~$ x_j$.
Let~$y_1:=x_i$, and~$y_n:=x_j$.
Choose the only neighbor of~$y_1$, that is not~$y_n$,
and associate with it the state-variable~$y_2$.
The rest of the construction is similar to that described in the proof of Prop.~\ref{prop:struct_k_pos_odd},
with the exception that the resulting graph is even-positive and not odd-positive.

\noindent {\sl{Case 2.}} 
Suppose that  all the edges are~$+$,
 except for a single  edge that is~$-$ 
and that every vertex has exactly two neighbors.
	 The sign-symmetric influence property 
implies that 
there exist  vertices~$x_i$ and~$x_j$ such that the
 edge from~$x_i$ to~$x_j$ is~$-$ and there is no edge from~$x_j$ to~$x_i$. The argument for this case is very similar
 to the one in the previous case.

\noindent {\sl{Case 3.}}
Suppose that all the edges are~$+$
 except for two edges that are~$-$
and that every vertex has exactly two neighbors except for two vertices, say,~$x_i$ and~$x_j$, that have a single neighbor.
Since the   graph is connected,~$x_i$ and~$x_j$ cannot be neighbors.
Thus, the negative edges are    incident to two 
vertices~$x_p, x_q$, that are different than~$x_i, x_j$.
Let~$y_1:=x_p$ and~$y_n:= x_q$. 
Next,    let~$y_2$ be the neighbour of~$y_1$ that is not~$y_n$.
We   iterate over the vertices labeling them as~$y_3$, $y_4$ and so on as  done in the previous cases.
This must end 
  at  
 either~$x_i$
or~$x_j$
as these   are the only vertices with
  a single neighbor.
   W.l.o.g. assume that this  is~$x_i$.
 Now, let~$y_{n-1}$ be the neighbour of~$y_n$ that is not~$y_1$. 
We   iterate over the remaining
 vertices,  labeling them as~$y_{n-2}$, $y_{n-3}$ and so on until we end up with~$x_j$, after which
all the state variables have been relabeled.
The resulting graph is even-positive, and not odd-positive.

\noindent {\sl{Case 4.}} Suppose that  all  the
edges are~$+$, except for one edge that is~$-$, 
and that every vertex has exactly two neighbors, except for two vertices that have a single neighbor.
The proof for this case is very similar 
 to the one in the previous case,
except  that now there is an edge from~$x_p$ to~$x_q$, but not from~$x_q$ to~$x_p$. 

The four cases above are  all the possible cases, and in each such case we showed that~\ref{item:evenkk} 
 holds.~\hfill{$\QED$}
   
Since the proofs of Props.~\ref{prop:struct_k_pos_odd}
and~\ref{prop:struct_k_pos_even} are constructive, they provide a way to actually 
determine a permutation that 
transforms~$\bar A$ into a~$k$-positive system. The next example demonstrates this. 
\begin{Example}
Consider again the matrix~$\bar A$
in  Example~\ref{ex:perm}. 
  This corresponds to Case~1 in the proof of Prop.~\ref{prop:struct_k_pos_even}.
Following the construction in this case yields the relabeling
\[
y_1=x_3,\; 
y_4=x_4, \;  y_2=x_2, \;  y_3=x_1,
\] 
or
\[ y_1=x_4, \; 
y_4=x_3, \;  y_2=x_2,\;   y_3=x_1.
\] 
The first of these is just the permutation used
 in Example~\ref{ex:perm} and applying this permutation
 indeed yields an even-positive system. It is easy to verify that this is true also for the permutation corresponding to the second relabeling. 
\end{Example}

The proofs of Props.~\ref{prop:struct_k_pos_odd}
and~\ref{prop:struct_k_pos_even} 
also provide  the  number of different 
permutations that yield a~$k$-positive system.
\begin{Fact}\label{fact:num_of_perm}
Suppose that~$\bar A$ is 
not~$k$-positive for any~$k\in[2,n-2]$.
If~$\bar A$  satisfies  properties~\ref{item:two_neig},
\ref{item:SSI}, and~\ref{item:dopt} then 
there exist~$2$ matrices in~$\P$ such that~$P\bar A P'$
is even-positive. 
If~$\bar A$  satisfies  properties~\ref{item:two_neig},
\ref{item:SSI}, and~\ref{item:copt} then 
there exist~$2n$ matrices in~$\P$ such that~$P\bar A P'$
is odd-positive. 
\end{Fact}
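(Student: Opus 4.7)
The plan is to count, in each of the two regimes, the permutations that relabel the vertices of the influence graph of $\bar A$ so that the result matches the canonical pattern $\bar A^n_k$. Properties~(a) and~(b) imply that the connected influence graph of $\bar A$ has maximum degree two, so as an undirected graph it is either a cycle $C_n$ or a simple path $P_n$, and every admissible permutation is a vertex relabeling carrying this graph onto the canonical influence graph associated with $\bar A^n_k$. The proof is then a direct continuation of the constructive arguments already given in the proofs of Prop.~\ref{prop:struct_k_pos_odd} and Prop.~\ref{prop:struct_k_pos_even}, to which it suffices to count degrees of freedom.

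For the even case, the target pattern $\bar A^n_k$ places its negative entries only at positions $(1,n)$ and $(n,1)$. By~(d) together with sign-symmetry, and the case analysis done during the proof of Prop.~\ref{prop:struct_k_pos_even}, the negative edges of $\bar A$ are incident to a single unordered pair of vertices $\{x_p,x_q\}$. Any admissible permutation must therefore send $\{x_p,x_q\}$ onto $\{y_1,y_n\}$, leaving only the binary choice of which of $x_p,x_q$ is labeled $y_1$. Once this choice is fixed, the walk constructions used in Cases~1--4 of the proof of Prop.~\ref{prop:struct_k_pos_even} determine the remaining labels $y_2,\ldots,y_{n-1}$ uniquely, yielding exactly 2 admissible permutations.

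For the odd case, every edge is $+$ and, in the cyclic situation covered by the fact, the influence graph is a cycle $C_n$. Any of the $n$ vertices may be designated as $y_1$, and from $y_1$ the remaining labels $y_2,\ldots,y_n$ are forced once one of the two directions of traversal around the cycle is chosen, giving $2n$ candidate permutations. These are pairwise distinct because either the starting vertex or the orientation differs. The main obstacle, in both regimes, is establishing that no other permutation works; this follows from the rigidity of $\bar A^n_k$, which requires every pair of consecutive labels to be graph-neighbors, combined with the maximum-degree-two constraint, which then forces the labeling to trace out the unique walk from $y_1$ in the influence graph of $\bar A$. Together these observations exclude any permutation outside the constructed families and yield the counts $2$ and $2n$.
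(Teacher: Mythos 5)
Your even-positive count is sound and follows essentially the paper's own route: the $-$ edge(s) must land on the positions $(1,n),(n,1)$ of $\bar A^n_k$, the only ones whose pattern tolerates a negative sign, which pins $\{x_p,x_q\}$ to $\{y_1,y_n\}$ up to a swap, after which the remaining labels propagate uniquely along the graph. The gap is in the odd case. First, you restrict to ``the cyclic situation,'' but properties~\ref{item:two_neig} and \ref{item:SSI} together with connectedness only force the undirected influence graph to be a cycle \emph{or a path}, and the path case is genuinely within the scope of the Fact: a path with all-$+$ edges whose vertices carry a scrambled labeling satisfies \ref{item:two_neig}, \ref{item:SSI}, \ref{item:copt} and is not $k$-positive for any $k$ (e.g.\ $n=4$ with edges $x_1\leftrightarrow x_3$, $x_3\leftrightarrow x_2$, $x_2\leftrightarrow x_4$). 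Second, the rigidity principle you invoke --- that $\bar A^n_k$ ``requires every pair of consecutive labels to be graph-neighbors'' --- is not what Definition~\ref{def:Wkset} says: a $+$ entry is a one-sided sign constraint and is satisfied by a zero entry, so position $(i,i+1)$ (or $(1,n)$) may correspond to a non-edge. The only hard constraint is that pairs with $1<|i-j|<n-1$ must be non-adjacent, i.e.\ every edge of the graph must be mapped into one of the $n$ edge-slots of the label cycle $y_1y_2\cdots y_ny_1$.

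For a cycle this distinction is harmless: its $n$ edges must occupy all $n$ slots, so consecutive labels are indeed neighbors and your $2n$ count goes through. For a path it is essential: its $n-1$ edges occupy $n-1$ of the $n$ slots, the vacant slot may be any one of the $n$ (including $(1,n)$), and each choice admits $2$ isomorphisms of the path onto the corresponding Hamiltonian path of the label cycle --- this is exactly where the factor $n$ in $2n$ comes from. Under your (too strong) rigidity claim a path would admit only $2$ admissible permutations, contradicting the stated count, so your argument both omits a case and, if applied to it, would give the wrong answer. The odd case therefore needs the path subcase treated separately with the correct notion of admissibility. (The paper's own one-line ``by symmetry'' argument is also written with the cycle in mind, but your write-up makes the restriction explicit and bases the exclusion of all other permutations on a false premise.)
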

\begin{proof}
Suppose that~$\bar A$  satisfies  properties~\ref{item:two_neig},
\ref{item:SSI}, and~\ref{item:dopt}.
  After the relabeling
	the state-variables that correspond to vertices 
that are incident to negative edges must be the first and the last state-variables. 
Thus, there are two possible options
to index them. After this choice is made, the indexing
of all the other state-variables is predetermined as 
in the proof of Prop.~\ref{prop:struct_k_pos_even}.
Indeed,   
any other choice yields a symbolic matrix~$\bar B$
 that has a
non-zero entry~$\bar b_{ij}$
with~$1<|i-j|<n-1$. 
Hence there are~$2$ possible  permutations.

Now suppose that~$\bar A$  satisfies  properties~\ref{item:two_neig},
\ref{item:SSI}, and~\ref{item:copt}.
Due to symmetry, every state-variable can be chosen to be 
the first state-variable (i.e.~$y_1$) yielding~$n$ possibilities.
After this choice is made, the indexing of all the other state-variables
can be done in either increasing   or decreasing order. 
Thus, there are overall~$2n$ possible permutations.
\end{proof}

	Next we study Problem~\ref{prob:ma}
	when we allow also scaling by a signature matrix (also called sign transformations). Note that if~$S $
is a signature matrix then~$S^{-1}=S$.

\subsection{$k$-positivity up to permutations 
and sign-transformations}\label{ssec:sign_trans}
 Using   sign-transformations allows
to detect more~$k$-positive systems ``in disguise''. 
 We begin with a simple example demonstrating this. 

\begin{Example}\label{ex:sign}
	Consider the symbolic matrix
	\be\label{eq:matre}
	\bar A  = \begin{bmatrix}     
	* &0&0&+ \\
	- &*&- &0 \\
	0&0&*&-\\
	+ &0& - &*																
	\end{bmatrix}. 
	\ee
The associated influence graph has four~$-$ edges, 
so Props.~\ref{prop:struct_k_pos_odd} and~\ref{prop:struct_k_pos_even}
imply that for any~$k\in[2,n-2]$
there does not exist a~$P\in\P$
so that~$P\bar A P'$ is~$k$-positive. 
For   the signature matrix~$S=\diag \{1,-1,1,1\}$,
the matrix
\[
S\bar A S = \begin{bmatrix}     
	* &0&0&+ \\
	+ &*&+ &0 \\
	0&0&*&-\\
	+ &0&- &*																 			
	\end{bmatrix}, 
\]
  satisfies  properties~\ref{item:two_neig},
\ref{item:SSI}, and~\ref{item:dopt},
so Prop.~\ref{prop:struct_k_pos_even} implies that there exists a
permutation matrix~$P$ such that~$PS \bar  A SP'$
is even-positive.  
\end{Example}

Let~$\S\subset\R^{n\times n}$  denote the set of signature  
matrices.
We introduce the following definition. 
\begin{Definition}
We call the symbolic matrix~$\bar A$
\emph{structurally odd-even-positive} if there exist~$P_1\in\P$
and~$S_1\in\S$ such that~$P_1S_1\bar AS_1P_1'$
is odd-positive and   there exist~$P_2\in\P$ and~$S_2\in\S$ so 
that~$P_2S_2\bar AS_2P_2'$
is even-positive.
We call~$\bar A$
\emph{structurally even-positive}  [\emph{structurally odd-positive}]
if it is \emph{not} odd-even-positive, 
and there exist~$P\in\P,S\in\S$  so that~$PS\bar ASP'$
is even-positive [odd-positive].
\end{Definition}

If~$S\in\S$ then
 letting~$y:=Sx$
implies that either~$y_i =  x_i$ or~$y_i=-x_i$
for all~$i$.
 In the latter case, the effect on the 
influence graph is 
  flipping the signs of all  the edges 
incident to   vertex~$x_i$. This implies that
properties~\ref{item:two_neig} and~\ref{item:SSI} are 
invariant to  sign-transformations.
To study the effect of setting~$y_i=-x_i$
on properties~\ref{item:copt} and~\ref{item:dopt}, we introduce more  notation.
For a symbolic matrix~$\bar A$, 
let~$\zeta(\bar A)$ denote the number of pairs of neighbors~$(x_i, x_j)$ such that
there is an~$-$ edge from~$x_i$ to~$x_j$ and/or from~$x_j$ to~$x_i$.
For example, for the matrix~$\bar A$ in~\eqref{eq:matre},
we have that three such pairs:~$(x_1,x_2)$, $(x_2,x_3)$, and~$(x_3,x_4)$, so~$\zeta(\bar A)=3$.

\begin{Proposition}\label{prop:sign_similar_iff}
Let~$\bar A$ be a symbolic matrix.
The following two conditions are equivalent.
\begin{enumerate}
\item\label{item:imp11}
$\bar A$ is either structurally odd-even-positive or
 structurally odd-positive.
\item\label{item:imp22}
 $\bar A$
satisfies properties~\ref{item:two_neig} 
 and~\ref{item:SSI} 
 and~$\zeta(\bar A)$ is even.
	\end{enumerate}
\end{Proposition}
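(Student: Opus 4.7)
The plan is to exploit the graph-theoretic interpretation of a sign-transformation $y=Sx$: this operation flips the signs of exactly those directed edges between $x_i$ and $x_j$ with $s_is_j=-1$, and the flip is symmetric in~$i$ and~$j$. Consequently, properties~\ref{item:two_neig} and~\ref{item:SSI} are preserved under both sign-transformations and permutations (the latter only relabel vertices), so by Prop.~\ref{prop:struct_k_pos_odd} condition~\ref{item:imp11} is equivalent to saying that $\bar A$ satisfies~\ref{item:two_neig} and~\ref{item:SSI} and that some $S\in\S$ turns every edge of $S\bar AS$ into a $+$ edge. The main combinatorial step is then the characterization of such ``sign-balanceability'' in terms of $\zeta(\bar A)$.

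By~\ref{item:two_neig} and the connectedness of the influence graph, the graph is either a simple path or a simple cycle. A sign-transformation flips precisely the edges in the cut induced by the bipartition $V=\{x_i:s_i=+1\}\sqcup\{x_i:s_i=-1\}$, and every cut of a cycle contains an even number of edges. Hence in the cycle case the parity of the number of negative undirected pairs, which is exactly $\zeta(\bar A)$, is an invariant of the sign-transformation orbit of $\bar A$; in the path case there is no cycle and hence no parity obstruction.

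For (2) $\Rightarrow$ (1), I would pick a starting vertex $v_1$, set $s_{v_1}=+1$, walk along the graph, and choose $s_{v_{k+1}}=\pm s_{v_k}$ at each step so that the traversed edge becomes $+$ after the flip. For a path this sequential assignment is always consistent; for a cycle it closes up at $v_1$ iff the number of flipped edges around the cycle is even, which by the parity-invariance is equivalent to $\zeta(\bar A)$ being even. The resulting $S\bar AS$ then satisfies~\ref{item:two_neig},~\ref{item:SSI}, and~\ref{item:copt}, so Prop.~\ref{prop:struct_k_pos_odd} produces the required permutation~$P$. For (1) $\Rightarrow$ (2), if $PS\bar ASP'$ is odd-positive for some $P,S$ then Prop.~\ref{prop:struct_k_pos_odd} yields~\ref{item:two_neig},~\ref{item:SSI}, and~\ref{item:copt}; the first two lift back to $\bar A$, while~\ref{item:copt} forces $\zeta(S\bar AS)=0$, and then parity-invariance forces $\zeta(\bar A)$ to be even.

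The main obstacle I anticipate concerns the path sub-case of the forward implication: a path with an odd number of negative pairs can still be sign-balanced to all $+$, which suggests the ``$\zeta$ even'' requirement is really an obstruction only in the cycle sub-case. I would handle this by carefully tracking the convention that $\bar a_{1n}=\bar a_{n1}=0$ in a path is compatible with both the $+$ and the $-$ templates of $\bar A^n_k$, so that the path case collapses into the general construction without producing an extra parity constraint, leaving the cycle case as the only place where the characterization by $\zeta(\bar A)$ mod~$2$ is active.
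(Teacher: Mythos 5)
Your overall strategy coincides with the paper's: both reduce condition~\ref{item:imp11} to the existence of an~$S\in\S$ for which every edge of~$S\bar AS$ is a~$+$ edge (after which Prop.~\ref{prop:struct_k_pos_odd} supplies the permutation), and both characterize this sign-balanceability by the parity of~$\zeta$. Your combinatorial core is the cleaner of the two: the paper balances signs by an iterative local-negation procedure that ``pushes'' a negative edge along the graph until it cancels against another, whereas you observe that the achievable edge-sign flips form exactly the cut space of a path or a cycle, which settles both directions at once in the cycle case.

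The obstacle you flag in the path case is genuine, and your proposed resolution does not close it. At a degree-one vertex a negation changes~$\zeta$ by~$\pm1$, so the parity of~$\zeta$ is \emph{not} invariant under sign transformations when the influence graph is a path; hence in your (\ref{item:imp11})$\Rightarrow$(\ref{item:imp22}) step, $\zeta(S\bar AS)=0$ does not force~$\zeta(\bar A)$ to be even. Concretely, for~$n=4$ take the path in which the pair~$(x_1,x_2)$ is negative and~$(x_2,x_3)$, $(x_3,x_4)$ are positive: then~$\zeta(\bar A)=1$ is odd, yet negating~$x_1$ yields an all-$+$ tridiagonal pattern, which is both odd- and even-positive, so condition~\ref{item:imp11} holds while condition~\ref{item:imp22} fails. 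Appealing to the convention that the zero corner entries are compatible with both templates only confirms this; it does not restore a parity obstruction where none exists. The equivalence as stated is therefore reliable only when every vertex has degree two (the cycle case); for paths, properties~\ref{item:two_neig} and~\ref{item:SSI} alone already give structural odd-even-positivity. For what it is worth, the paper's own proof contains the same gap --- it asserts that negation always preserves the parity of~$\zeta$, and that negating a vertex joined to all of its one or two neighbors by~$-$ edges decreases~$\zeta$ by exactly~$2$ --- so you have correctly isolated the step that needs repair, but the repair is a restriction or an explicit case-split of the statement, not bookkeeping of the zero entries.
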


\begin{Proposition}\label{prop:even_similar_iff}
Let~$\bar A$ be a symbolic matrix.
The following two conditions are equivalent.
\begin{enumerate}[(I)]
\item\label{item:steven}
$\bar A$ is structurally   even-positive.
\item\label{item:zetaodd}
 $\bar A$
satisfies properties~\ref{item:two_neig} 
 and~\ref{item:SSI}  
 and~$\zeta(\bar A)$ is odd.
	\end{enumerate}
\end{Proposition}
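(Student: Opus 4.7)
The plan is to follow the structure of the proof of Prop.~\ref{prop:sign_similar_iff}, tracking how $\zeta$ behaves under a sign scaling $S = \diag((-1)^{v_1}, \ldots, (-1)^{v_n})$ with $v_i \in \{0,1\}$. For a pair of neighbors $\{x_i, x_j\}$, the sign flips in $S\bar A S$ exactly when $v_i + v_j$ is odd, so summing over the edges of the (undirected) influence graph yields
\[
\zeta(S\bar A S) \;\equiv\; \zeta(\bar A) + \sum_{i=1}^n d_i v_i \pmod 2 ,
\]
where $d_i$ is the degree of $x_i$. Properties~\ref{item:two_neig} and~\ref{item:SSI} are manifestly preserved by permutations and sign scalings, and by connectedness together with~\ref{item:two_neig} the graph is either a cycle (all $d_i = 2$) or a path (two endpoints of degree~$1$); on a cycle the identity shows that the parity of $\zeta$ is invariant under sign scalings.

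For \ref{item:steven}$\Rightarrow$\ref{item:zetaodd}, I would let $B := PS\bar ASP'$ be even-positive and observe that \ref{item:two_neig} and \ref{item:SSI} descend back to $\bar A$. The crucial step is to show $\zeta(B) = 1$: the even $\bar A^n_k$ pattern forces every non-corner pair of $B$ to be $+$, so $\zeta(B) \leq 1$; and if $\bar b_{1n} = \bar b_{n1} = 0$ then $B$ would also be compatible with $\bar A^n_3$, making it simultaneously odd-positive and hence $\bar A$ odd-even-positive via the same $(P,S)$, contradicting \ref{item:steven}. Since the influence graph of $B$ is a cycle, the parity identity above then gives $\zeta(\bar A) \equiv 1 \pmod 2$.

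For \ref{item:zetaodd}$\Rightarrow$\ref{item:steven}, assuming \ref{item:two_neig}, \ref{item:SSI}, and $\zeta(\bar A)$ odd, I would first rule out the path case: any path graph admits a sign transformation to the all-positive path, whose zero corners are compatible with both $\bar A^n_2$ and $\bar A^n_3$, rendering $\bar A$ odd-even-positive and therefore not structurally even-positive. In the remaining cycle case, viewing the sign assignment $v$ and the induced edge-flip pattern as an $\mathbb{F}_2$-linear map, its image is precisely the even-weight subspace (the kernel being $\{0,\mathbf{1}\}$), so every negative-pair pattern of the same parity as $\bar A$ is reachable by some $S$. Choosing $S$ to leave exactly one negative pair and then applying the Case~1 permutation from the proof of Prop.~\ref{prop:struct_k_pos_even} to place that pair at $(y_1, y_n)$ produces an even-positive $PS\bar ASP'$. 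Parity invariance forbids ever reaching $\zeta = 0$, so $\bar A$ can never be made odd-positive, confirming it is structurally even-positive.

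The main obstacle is the ``zero versus literal $-$'' subtlety in the forward direction: because a zero entry is compatible with both signs, one must carefully exploit the not-odd-even-positive hypothesis to prove $\zeta(B) = 1$ rather than $\zeta(B) = 0$. Once this is handled, the reverse direction reduces to the $\mathbb{F}_2$-linear reachability argument on the cycle, combined with the permutation construction already available from Prop.~\ref{prop:struct_k_pos_even}.
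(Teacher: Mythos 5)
Your forward direction is sound and is essentially a sharper version of the paper's: you pin down $\zeta(B)=1$ by combining the corner structure of $\bar A^n_k$ with the ``not odd-even-positive'' hypothesis (which is exactly what is needed to exclude the zero-corner case), and your identity $\zeta(S\bar AS)\equiv\zeta(\bar A)+\sum_i d_iv_i \pmod 2$ makes precise the paper's one-line claim that negations preserve the parity of $\zeta$. Your converse replaces the paper's iterative ``push the negative edge around the cycle'' procedure with an $\mathbb{F}_2$-reachability argument on the cycle; the two are equivalent there, and yours is arguably cleaner.

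The gap is in your treatment of the path case of the converse. You correctly observe that a connected graph satisfying the degree constraint is a cycle or a path, and that any path can be sign-normalized to the all-$+$ path, which makes $\bar A$ structurally odd-even-positive and hence \emph{not} structurally even-positive. But this does not ``rule out'' the path case, because a path can perfectly well satisfy condition (II): take $n=5$ and the path $x_1-x_2-\cdots-x_5$ with the single pair $\{x_1,x_2\}$ negative and all other pairs positive. Properties (a) and (b) hold and $\zeta(\bar A)=1$ is odd, yet negating $x_1$ yields the all-$+$ tridiagonal pattern, whose zero corners are compatible with both $\bar A^5_2$ and $\bar A^5_3$, so $\bar A$ is odd-even-positive and (I) fails. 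What your argument actually produces in this case is a counterexample to (II)$\Rightarrow$(I) as stated, not a proof of it. (The paper's own proof has the same blind spot: its parity argument tacitly assumes every vertex has degree two, and its converse invokes Prop.~\ref{prop:struct_k_pos_even}, which only excludes odd-positivity up to a permutation, not up to a permutation combined with a signature matrix.) Your proof becomes correct if one adds the hypothesis that every vertex has exactly two neighbors, i.e.\ that the influence graph is a cycle; you should either add that hypothesis explicitly or present the path case as a counterexample rather than as a case that can be dismissed.
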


\begin{Example}
	Consider the symbolic matrix~$\bar A$
	in Example~\ref{ex:sign}.
	It  
	satisfies properties~\ref{item:two_neig},~\ref{item:SSI}, 
	and~$\zeta(\bar A)=3$.
	Thus, Prop.~\ref{prop:even_similar_iff} implies
	that~$\bar A$ is structurally   even-positive.
\end{Example}

{\sl Proof of Prop.~\ref{prop:sign_similar_iff}}.
Suppose that~\ref{item:imp11}  holds, i.e.
  there exist~$P\in\P$  and~$S\in \S$ such that~$\bar B: = PS\bar ASP'$
is odd-positive. 
Then~$\bar B$ satisfies properties~\ref{item:two_neig}  and~\ref{item:SSI}, and thus so does~$\bar A$. The matrix~$\bar B$, and thus also the matrix~$P'\bar B P =  S \bar A S $,
    satisfies property~\ref{item:copt}.
		This   
		implies that~$\zeta(S \bar A S   )=0$.
Since negation of a state-variable~$x_i$
  flips the signs of all
the edges 
incident to the vertex~$x_i$,  $\zeta(\bar A)$ must be even.
Thus,~\ref{item:imp11}) implies~\ref{item:imp22}).

	To prove the converse implication, assume that~\ref{item:imp22}) holds. We consider two cases.
	
	\noindent \emph{Case 1.}
	If~$\zeta(\bar A)=0$ then all the edges are~$+$,
		so~$\bar A$ satisfies   
	properties~\ref{item:two_neig},~\ref{item:SSI}, and~\ref{item:dopt}, and 
	combining this with Prop.~\ref{prop:struct_k_pos_odd} implies that there exists~$P\in\P$
	such that~$P\bar A P'$ is odd-positive. 
	
		\noindent \emph{Case 2.}
	If~$\zeta(\bar A)>0$  
	then pick a vertex~$x_i$ that
	is incident to a~$-$ edge and apply a
negation on this vertex. If~$x_i$ was connected to 
all its neighbors (where the number of neighbors is either one or two) by~$-$ edges  then after the negation it
is connected to all of them by~$+$ edges. 
Thus,~$\zeta(S_i\bar AS_i)=\zeta(\bar A)-2$, where~$S_i\in\S$
corresponds to  the negation of~$x_i$. 
If~$\zeta(S_i\bar AS_i)=0$ 
then we conclude as in Case~1   
that there exists~$P\in\P$
	such that~$P S_i \bar A S_i P'$ is odd-positive. 
Else, we pick  another vertex that is incident to a negative edge and continue the process.

If~$x_i$ was connected to one neighbor~$x_j$ by an~$-$  edge and to another neighbor~$x_q$ by an~$+$ edge
then after the negation it is
 connected to~$x_q$ by an~$-$   edge.
We proceed by negating~$x_q$.
We continue  this procedure until~$\zeta$ decreases by~$2$.
This is bound to eventually take place, since every time we apply a negation neighbor after neighbor, 
we ``push'' the negative edges in the same ``direction'', until we finally apply a negation on a vertex 
that is connected to all its neighbors by negative edges (recall
that~$\zeta(\bar A)$ is positive and even).

Then  either there are no more negative edges in the graph,
or   we repeat the process again.
Since it is clear that we do not apply a negation on any state variable more than once, this process
is finite and terminates after decreasing~$\zeta$ to zero.
We conclude as in Case~1 that
that there exists~$P\in\P$
	such that~$PS\bar AS P'$ is odd-positive, where~$S\in\S$ represents all the negations used in the process.

	Thus, we showed that~\ref{item:imp22}) implies~\ref{item:imp11}).~\hfill{$\QED$}

{\sl Proof of Prop.~\ref{prop:even_similar_iff}.}
Suppose that~\ref{item:steven}  holds, i.e.
  there exist~$P\in\P$ and~$S\in \S$ such that~$\bar B: = PS\bar ASP'$ is even-positive, yet~$\bar A$
	is not structurally odd-even positive. 
Then~$\bar B$ satisfies properties~\ref{item:two_neig}  and~\ref{item:SSI}, and thus so does~$\bar A$. The matrix~$\bar B$, and thus also the matrix~$P'\bar B P =  S \bar A S $,
    satisfies property~\ref{item:dopt}. Combining this with the fact that~$\bar A$
	is not structurally odd-even positive implies 
	that~$\zeta(S \bar A S   )=1$.
Since negation of a state-variable~$x_i$
  flips the signs of all
the edges 
incident to the vertex~$x_i$,  $\zeta(\bar A)$ must be odd.
Thus,~\ref{item:steven}  implies~\ref{item:zetaodd}.

	To prove the converse implication, assume that~\ref{item:zetaodd}  holds.  
	Arguing as in the proof of Prop.~\ref{prop:sign_similar_iff}, 
	we can find an~$S\in\S$ such that~$\zeta(S \bar A S)=1$.
	This implies that~$r$, the number  of~$-$ edges,
	satisfies~$r\in\{1,2\}$. Thus,~$S\bar A S$ satisfies 
		properties~\ref{item:two_neig}, \ref{item:SSI}
		and~\ref{item:dopt}. Applying Prop.~\ref{prop:struct_k_pos_even} completes the proof of
	Prop.~\ref{prop:even_similar_iff}.~\hfill{$\QED$}

\begin{Remark}
	Since the proof of Props.~\ref{prop:sign_similar_iff}
	and~\ref{prop:even_similar_iff}
	are constructive, they provide 
	an algorithm for 
	finding the transformations that 
	  yield  a~$k$-positive system.	
\end{Remark}

\section{An application}\label{sec:lts}
Consider the nonlinear Lotka-Volterra system:
\be\label{eq:lts}
\dot x_i=x_i(r_i+\sum_{j=1}^n a_{ij}x_j),\quad i\in[1,n],
\ee
with~$r_i,a_{ij} \in\R$. Such systems play an important role in mathematical ecology~\cite{hofbauer_sigmund_1998}. 
The relevant  state-space in these 
applications is~$\R^n_+$.  
\begin{Proposition}\label{prop:LTS}
Consider~\eqref{eq:lts} with~$n\geq 4$ and such  
 that:
\begin{enumerate}
\item				for any $i \in [1,n]$ there are at most two values   $j\not =i$ such that $a_{ij}$ is non-zero; 
\item
$a_{ij} a_{ji}\geq 0$ for any~$i,j$.
\end{enumerate}
Then there exist~$P\in \P$, $S\in \S$ such that the system obtained by setting~$y(t):=P S x(t)$
 is~$k$-cooperative for some~$k\in[2,n-2]$.
\end{Proposition}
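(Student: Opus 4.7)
The plan is to apply Props.~\ref{prop:sign_similar_iff} and~\ref{prop:even_similar_iff} to the off-diagonal sign pattern of the Jacobian of~\eqref{eq:lts} and to pull the resulting $k$-positivity back to $k$-cooperativity of the transformed system. A direct calculation gives $J_{ij}(x)=a_{ij}x_i$ for $i\neq j$, so on the state space $\mathbb{R}^n_+$ the off-diagonal entry $J_{ij}(x)$ is either zero (when $x_i=0$, which is consistent with any symbol in $\{*,+,-,0\}$) or has the sign of $a_{ij}$. Hence the off-diagonal sign pattern of $J(x)$ is captured, simultaneously for every $x\in\mathbb{R}^n_+$, by a single symbolic matrix $\bar A$ with $\bar a_{ii}=*$ and $\bar a_{ij}=\operatorname{sgn}(a_{ij})$ for $i\neq j$.

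I would then verify that the influence graph of $\bar A$ satisfies properties~\ref{item:two_neig} and~\ref{item:SSI}. Property~\ref{item:SSI} is immediate from condition~2: $a_{ij}a_{ji}\ge 0$ forces any opposite-direction pair of edges between $x_i$ and $x_j$ to have the same sign. Property~\ref{item:two_neig} follows from condition~1, read together with condition~2 as saying that each species interacts with at most two others; in particular, every vertex of the influence graph has at most two neighbours in the undirected sense. One then splits on the parity of $\zeta(\bar A)$ and applies Prop.~\ref{prop:sign_similar_iff} (if $\zeta(\bar A)$ is even) or Prop.~\ref{prop:even_similar_iff} (if it is odd) to obtain $P\in\mathcal{P}$, $S\in\mathcal{S}$ and an integer $k\in[2,n-2]$ such that the symbolic matrix $PS\bar A SP'$ admits the sign pattern $\bar A^n_k$.

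To finish, set $y(t):=PSx(t)$. The Jacobian of the $y$-system equals $PSJ(x)SP'$, whose off-diagonal sign pattern is exactly $PS\bar A SP'=\bar A^n_k$ on the transformed state space. Plugging this into~\eqref{eq:mtmat} and invoking Thm.~\ref{thm:kiff} shows that the variational LTV system~\eqref{eq:var} for any pair of trajectories is $k$-positive, so the $y$-system is $k$-cooperative. The most delicate step is the verification of property~\ref{item:two_neig}: condition~1 literally bounds only the per-row sparsity of $A$ (the in-degree of each vertex in the influence graph of $\bar A$), so one must read conditions~1 and~2 in tandem as encoding bidirectional ecological interactions to upgrade this to the undirected-degree bound required by Props.~\ref{prop:sign_similar_iff} and~\ref{prop:even_similar_iff}. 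This is the only point at which the specific algebraic form of~\eqref{eq:lts} enters the argument.
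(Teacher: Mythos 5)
Your argument is the paper's argument, step for step: compute $J_{ij}(x)=a_{ij}x_i$ for $i\neq j$, note that on $\R^n_+$ the off-diagonal sign pattern of $J(x)$ is that of $A$, verify properties~\ref{item:two_neig} and~\ref{item:SSI}, and invoke Props.~\ref{prop:sign_similar_iff} and~\ref{prop:even_similar_iff}; the passage from the constant sign pattern of $J$ to $k$-positivity of the variational system via~\eqref{eq:mtmat} and Thm.~\ref{thm:kiff} is also exactly what the paper intends, and you spell it out more carefully than the paper does.

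The one point where you deviate is the one point where you are right to be uneasy, and your resolution does not hold up. Condition~1 bounds only $|\N_{in}(x_i)|$ (the number of nonzero entries in row $i$), and condition~2 does \emph{not} upgrade this to a bound on $|\N(x_i)|$: the inequality $a_{ij}a_{ji}\geq 0$ is satisfied whenever one of the two entries vanishes, so it places no symmetry constraint on the support of $A$. Appealing to ``bidirectional ecological interactions'' is an interpretation, not a deduction. Concretely, for $n=4$ let the only nonzero off-diagonal entries be $a_{12},a_{13},a_{41}$. Both hypotheses of the proposition hold, yet $x_1$ has undirected degree $3$. Since a permutation relabels the support of $A$ and a signature matrix preserves it, while the pattern $\bar A^4_k$ permits nonzero off-diagonal entries only in positions $(i,j)$ with $|i-j|\in\{1,3\}$ --- a symmetric set in which each index is ``adjacent'' to exactly two others --- no choice of $P\in\P$, $S\in\S$ can place three neighbours of $x_1$ into allowed positions. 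So the gap you flagged is genuine; it is present, silently, in the paper's own proof as well (the paper simply asserts that the influence graph of $A$ satisfies property~\ref{item:two_neig}). The clean repair is to strengthen condition~1 to bound, for each $i$, the number of $j\neq i$ with $a_{ij}\neq 0$ \emph{or} $a_{ji}\neq 0$, i.e., to bound the undirected degree directly; with that hypothesis your write-up (and the paper's) goes through without further change.
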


\begin{proof}
Let~$A:=\{a_{ij}\}_{i,j=1}^n$. For any~$i\not =j$, the~$(i,j)$ entry in the Jacobian of~\eqref{eq:lts} is~$J_{ij}(x)= a_{ij} x_i$. 
This implies that,  ignoring   diagonal entries 
(that are not relevant for our purposes), $J(x)$ has the same sign pattern as~$A$ for all~$x\in\R^n_+$.
The influence graph of~$A$ satisfies properties~\ref{item:two_neig} 
 and~\ref{item:SSI}. 
Combining this with
 Props.~\ref{prop:sign_similar_iff} and~\ref{prop:even_similar_iff} completes the proof. 
\end{proof}

Note that the fact that the nonlinear~$y$-system
is~$k$-cooperative has important implications. If~$k$ is odd then the system is~$1$-cooperative, i.e. cooperative. 
If~$k$ is even then the system is~$2$-cooperative and, under an additional irreducibility assumption,
it   satisfies the strong 
    Poincar\'{e}-Bendixson
property described in~\cite{eyal_k_pos}: any omega limit set of a bounded trajectory that does not include an equilibrium is a periodic orbit.

\section{Conclusion}\label{sec:conc}
 $k$-positive systems generalize the well-known  positive systems.
An important advantage of such systems is  the existence of simple sign-pattern conditions guaranteeing that a system is~$k$-positive. 
It was recently shown that     $k$-positive systems
provide a useful tool for analyzing the asymptotic behavior of 
 nonlinear systems. However, $k$-positivity is not invariant under coordinate transformations.

Here, we derived graph-theoretic 
necessary and sufficient conditions for 
a system to be~$k$-positive 
up to two types of transformations: permutations of the state-variables and scaling by a signature matrix.
We also   provided algorithms that explicitly find such transformations, when they exist.

It would be interesting to extend these results to  more general 
coordinate transformations.


 \end{document}